\newfont{\footsc}{cmcsc10 at 8truept}
\newfont{\footbf}{cmbx10 at 8truept}
\newfont{\footrm}{cmr10 at 10truept}
\newtheorem{theorem}{\bf Theorem}
\newtheorem{lemma}{\bf Lemma}
\begin{document}
\title{Positivity of the Rational Function of Gillis, Reznick and Zeilberger}

\author{Yaming Yu\\
\small Department of Statistics\\[-0.8ex]
\small University of California\\[-0.8ex] 
\small Irvine, CA 92697, USA\\[-0.8ex]
\small \texttt{yamingy@uci.edu}}

\date{}
\maketitle

\begin{abstract}
We prove that the power series expansion of the rational function of Gillis, Reznick and Zeilberger (1983) has only nonnegative coefficients.
\end{abstract}

\begin{section}{Introduction}
Deciding whether a multivariate rational function has all positive coefficients in its Taylor expansion around the origin is a classical topic dating back to  Szeg\"{o} (1933), with multiple open conjectures.  Recently much progress has been made, including Scott and Sokal's (2014) insightful solution of the Lewy-Askey problem which shows that the expansion of
$(1-\sum_{i=1}^4 t_i +(2/3)\sum_{1\leq i<j\leq 4} t_it_j)^{-1}$ has only positive coefficients.  There is also a long-standing conjecture of Gillis, Reznick and Zeilberger (1983) which states that for $r\geq 4$ the expansion of $(1-\sum_{i=1}^r t_i +r!\prod_{i=1}^r t_i)^{-1}$ has only nonnegative coefficients (henceforth, the GRZ conjecture).  Using symbolic computation, Kauers (2007) has shown that the GRZ conjecture holds for $r=4,5,6$.  This is extended by Pillwein (2019) to $r=7, \ldots, 17$.  We take a less computer-intensive approach and are able to prove the GRZ conjecture in full.  A new ingredient in our method is analyzing the locations of roots of some real-rooted polynomials derived from the GRZ rational function.  Such analyses can conceivably be performed on other rational functions and resolve similar positivity questions. 

In Section~2 we present our proof.  For related results, see, for example, Straub and Zudilin (2015). 
\end{section}

\begin{section}{Proof of Main Result}
\begin{lemma}
\label{lem1}
For $r=2,3,\ldots$ we have the following formal power series
\begin{equation}
\label{uni}
\frac{1}{1-at + bt^r} = \sum_{n=0}^\infty \sum_{l=0}^{r-1} t^{nr+l} \left[a^l \sum_{k=0}^n (-1)^k {r(n-k)+k+l \choose k} a^{r(n-k)} b^k\right].
\end{equation}
\end{lemma}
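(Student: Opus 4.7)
The identity is a routine binomial expansion, so the plan is to prove it directly by expanding $1/(1-at+bt^r)$ as a geometric series and comparing coefficients of each monomial $t^{nr+l}$.

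First, I would write
\begin{equation*}
\frac{1}{1-at+bt^r} \;=\; \sum_{m=0}^\infty (at - bt^r)^m \;=\; \sum_{m=0}^\infty \sum_{k=0}^m \binom{m}{k}(-1)^k a^{m-k} b^k \, t^{(m-k)+rk},
\end{equation*}
where the inner step is the binomial theorem. The exponent of $t$ in the $(m,k)$ term equals $m + (r-1)k$.

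Next, I would reindex by writing the total exponent uniquely as $nr+l$ with $n\geq 0$ and $0\leq l\leq r-1$. Setting $m+(r-1)k = nr+l$ forces $m = nr+l-(r-1)k$, so $m-k = r(n-k)+l$ and the condition $0\leq k\leq m$ translates into $0\leq k\leq n$ (using $l<r$). Substituting yields
\begin{equation*}
\binom{m}{k} = \binom{nr+l-(r-1)k}{k} = \binom{r(n-k)+k+l}{k}, \qquad a^{m-k} = a^{r(n-k)+l},
\end{equation*}
and collecting gives exactly the bracketed expression in \eqref{uni}.

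There is no real obstacle — the only care needed is to check that the substitution $m\mapsto (n,k)$ via $m = nr+l-(r-1)k$ is a bijection between the index sets $\{(m,k):0\leq k\leq m\}$ and $\{(n,k,l):n\geq 0,\ 0\leq l\leq r-1,\ 0\leq k\leq n\}$, after which the identity falls out. Since we work in the ring of formal power series, no convergence issue arises.
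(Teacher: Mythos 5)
Your proof is correct: the geometric-series expansion, the reindexing $m+(r-1)k=nr+l$, and the translation of $0\le k\le m$ into $0\le k\le n$ (via $rk\le nr+l$ and $l<r$) all check out, and the resulting coefficient matches the bracketed sum in \eqref{uni} exactly. The paper's proof takes the complementary route: it \emph{verifies} the identity by multiplying the right-hand side by $1-at+bt^r$ and comparing coefficients, which reduces to a telescoping cancellation via Pascal's rule $\binom{N}{k}=\binom{N-1}{k}+\binom{N-1}{k-1}$. Your version is a \emph{derivation} rather than a verification --- it explains where the binomial coefficients $\binom{r(n-k)+k+l}{k}$ come from (they count the choices of the $bt^r$ factor in $(at-bt^r)^m$), at the cost of the bookkeeping needed to confirm the reindexing is a bijection, which you correctly identify as the only point requiring care. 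Either route is a complete proof of the lemma.
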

\begin{proof}
Multiply and compare coefficients.
\end{proof}

\begin{lemma}
\label{lem2}
Assume $a=r,\ b\leq (r-1)^{r-1}$ and $r=2,3\ldots$.  Then (\ref{uni}) has all positive coefficients as a univariate power series in $t$.
\end{lemma}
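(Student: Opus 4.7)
My plan is to show that the hypothesis $b \le (r-1)^{r-1}$ is exactly the condition under which $1 - rt + bt^r$ admits a factorization
\[
1 - rt + bt^r = (1-\alpha t)\bigl(1 - H(t)\bigr), \qquad H(t) = \sum_{j=1}^{r-1} h_j t^j,
\]
with $\alpha \in (0, r)$ and all $h_j \ge 0$. Once such a factorization is in hand, expanding $(1-rt+bt^r)^{-1} = (1-\alpha t)^{-1} \sum_{k \ge 0} H(t)^k$ makes positivity of every coefficient manifest. I will dispose of the trivial case $b \le 0$ immediately, since then the recurrence $c_n = rc_{n-1} - bc_{n-r}$ with positive initial conditions $c_0,\ldots,c_{r-1} = 1, r, \ldots, r^{r-1}$ propagates positivity automatically; I therefore assume $0 < b \le (r-1)^{r-1}$.

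Matching coefficients of $t, t^2, \ldots, t^{r-1}$ in the proposed factorization forces, inductively on $j$, that $h_j = \alpha^{j-1}(r-\alpha)$, while the top coefficient yields the single equation $\alpha^{r-1}(r-\alpha) = b$. Call the left-hand side $\phi(\alpha)$. A one-line derivative computation gives $\phi'(\alpha) = r\alpha^{r-2}(r-1-\alpha)$, so $\phi$ increases on $[0, r-1]$ from $0$ to $(r-1)^{r-1}$ and then decreases back to $0$ on $[r-1, r]$. Hence the hypothesis $0 < b \le (r-1)^{r-1}$ is exactly what is needed to guarantee a root $\alpha \in (0, r-1]$ of $\phi(\alpha) = b$, and for such an $\alpha$ every $h_j = \alpha^{j-1}(r-\alpha)$ is strictly positive.

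With the factorization in hand, $(1 - H(t))^{-1} = \sum_k H(t)^k$ has strictly positive $t^n$ coefficient for every $n \ge 0$: the $H(t)^n$ term alone contributes $(h_1 t)^n = h_1^n t^n$, and all other contributions are nonnegative. Multiplying by $(1-\alpha t)^{-1} = \sum_n \alpha^n t^n$, which also has strictly positive coefficients, preserves positivity, completing the argument. I do not anticipate a serious obstacle: the entire proof is driven by the elementary fact that $(r-1)^{r-1}$ is precisely $\max \phi$, so the hypothesis on $b$ matches the threshold for the factorization to exist exactly. The only creative step is the ansatz of peeling off a single linear factor $(1-\alpha t)$; once that is chosen, everything reduces to routine calculus and generating-function manipulation.
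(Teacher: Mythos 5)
Your proof is correct and is essentially the paper's argument in disguise: your $\alpha$ is $1/t_0$ where $t_0$ is a positive root of $1-rt+bt^r$, your equation $\phi(\alpha)=b$ is equivalent to $g(t_0)=0$, and both proofs hinge on $b\le (r-1)^{r-1}$ being exactly the threshold for that root/factorization to exist, after which each factor is inverted separately. The only cosmetic difference is that the paper locates the root by minimizing $g$ directly rather than by analyzing $\phi(\alpha)=\alpha^{r-1}(r-\alpha)$.
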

\begin{proof}
If $b\leq 0$ then $(1-rt+bt^r)^{-1} = \sum_{n=0}^\infty (rt-bt^r)^n$ which evidently has all positive coefficients. 
Suppose $0<b\leq (r-1)^{r-1}$.  Then the minimum of $g(t)\equiv 1-rt+bt^r$ on $t\in (0, \infty)$ is achieved when $g'(t) = brt^{r-1} -r =0$, that is, when $t=  t^* \equiv b^{-1/(r-1)}$.  Moreover, $g(t_*) = 1-(r-1)b^{-1/(r-1)} \leq 0$ by assumption.  Hence $g(t)=0$ has at least one positive solution, say $t_0$, and we may write 
 $$g(t) = g(t)- g(t_0) = (t_0 - t)\left(r-b\sum_{k=0}^{r-1} t_0^k t^{r-1-k}\right).$$
Since $t_0>0,\, b>0$ and $r-bt_0^{r-1} = 1/t_0>0$, upon inverting $t_0-t$ and the other term separately we see that $g^{-1}(t)$ has all positive coefficients.
\end{proof}

\begin{lemma}
\label{lem3}
For $r=2, 3, \ldots, l=0,1\ldots, r-1,$ and $n=1,2,\ldots,$ all roots of the polynomial in $s$
$$h(s) \equiv  \sum_{k=0}^n (-1)^k {r(n-k)+k+l \choose k} s^{n-k}$$
are real and lie within the interval $(0, r^r/(r-1)^{r-1})\subset(0, er)$ with $e=2.71828\cdots$.
\end{lemma}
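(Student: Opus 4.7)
My plan is to reduce first to a pure real-rootedness statement and then prove real-rootedness by induction using a pair of recurrences for $h$.

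\emph{Reduction.} By Lemma~\ref{lem1} with $a=r$, the coefficient of $t^{nr+l}$ in $(1-rt+bt^r)^{-1}$ equals $r^l b^n h(r^r/b)$, and Lemma~\ref{lem2} says this coefficient is positive for $0<b\le(r-1)^{r-1}$; hence $h(s)>0$ whenever $s\ge r^r/(r-1)^{r-1}$, which, once real-rootedness is established, forces every real root of $h$ to lie strictly below $r^r/(r-1)^{r-1}$. The coefficients of $h(s)$ alternate in sign and $h(0)=(-1)^n\binom{n+l}{n}\ne 0$, so Descartes' rule forbids negative or zero real roots. Finally, $r^r/(r-1)^{r-1}=r(1+1/(r-1))^{r-1}<er$ since $(1+1/k)^k<e$.

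\emph{Recurrences and strategy.} The identity $(1-rt+bt^r)\sum_m c_m(b)t^m=1$ yields $c_{m+1}(b)=r\,c_m(b)-b\,c_{m-r+1}(b)$. Writing $h_{n,l}$ for the polynomial of the statement (making $n,l$ explicit) and using $c_{nr+l}(b)=r^l b^n h_{n,l}(r^r/b)$, the substitution $s=r^r/b$ turns that single recurrence into
\begin{align*}
\text{(R1)}\quad & h_{n,l+1}(s)=h_{n,l}(s)-h_{n-1,l+1}(s)\quad(0\le l\le r-2),\\
\text{(R2)}\quad & h_{n+1,0}(s)=s\,h_{n,r-1}(s)-h_{n,0}(s).
\end{align*}
I will induct on $n$, maintaining the \emph{cluster invariant}: each $h_{n,l}$ has $n$ distinct positive real roots $\alpha_1^{(n,l)}<\cdots<\alpha_n^{(n,l)}$, and the full list of $nr$ roots satisfies
$$0<\alpha_1^{(n,0)}<\alpha_1^{(n,1)}<\cdots<\alpha_1^{(n,r-1)}<\alpha_2^{(n,0)}<\cdots<\alpha_n^{(n,r-1)}.$$
The base case $n=1$ gives $h_{1,l}(s)=s-(l+1)$ with roots $1,2,\dots,r$, which satisfies the invariant.

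\emph{Step and main obstacle.} For the step $n\to n+1$, first apply (R2): by the level-$n$ invariant, the roots of $h_{n,0}$ strictly separate those of $sh_{n,r-1}$ (i.e., $0$ together with the roots of $h_{n,r-1}$), so sign analysis of $h_{n+1,0}$ at the $2n+1$ critical points (or Hermite--Kakeya--Obreschkoff) gives $n+1$ real roots, one each in $(0,\alpha_1^{(n,0)})$, in $(\alpha_{j-1}^{(n,r-1)},\alpha_j^{(n,0)})$ for $2\le j\le n$, and in $(\alpha_n^{(n,r-1)},\infty)$. Then iterate (R1) for $l=0,1,\dots,r-2$, producing at each step the correct same-degree and cross-degree interlacings. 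The delicate part is verifying the input interlacing $h_{n,l+1}\prec h_{n+1,l}$ at each (R1) step: the lower bound $\alpha_j^{(n+1,l)}<\alpha_j^{(n,l+1)}$ chains through $\alpha_j^{(n+1,l)}<\alpha_j^{(n,l)}<\alpha_j^{(n,l+1)}$ (using previous outputs and the level-$n$ invariant), while the subtler upper bound $\alpha_j^{(n,l+1)}<\alpha_{j+1}^{(n+1,l)}$ follows from the chain $\alpha_{j+1}^{(n+1,l)}\ge\alpha_{j+1}^{(n+1,0)}>\alpha_j^{(n,r-1)}\ge\alpha_j^{(n,l+1)}$, combining the same-degree interlacings accumulated at level $n+1$ with the (R2) root location and the level-$n$ invariant. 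The cluster-separation $\alpha_i^{(n+1,r-1)}<\alpha_{i+1}^{(n+1,0)}$ at level $n+1$ follows similarly, closing the induction.
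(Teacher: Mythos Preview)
Your reduction---using Lemmas~\ref{lem1} and~\ref{lem2} to bound the real roots above by $r^r/(r-1)^{r-1}$, and alternating signs to exclude nonpositive roots---is exactly what the paper does. The difference lies entirely in the real-rootedness step. The paper dispatches it in one sentence by invoking Yu (2009): the coefficients $\binom{r(n-k)+k+l}{k}$ lie on a ray in Pascal's triangle, and such finite sequences are P\'olya frequency sequences, hence the generating polynomial has only real roots. You instead give a self-contained inductive argument via the recurrences (R1)--(R2) and the ``cluster invariant'' interlacing.

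Both approaches are valid. The paper's is dramatically shorter but outsources the substantive work to a nontrivial external total-positivity result. Yours is longer and more delicate---the bookkeeping of which inequality at which level feeds the next interlacing is precisely as subtle as you flag---but it is elementary and self-contained, and it actually proves more: the full interlacing of all $nr$ roots across the $r$ values of $l$. Your sketch is essentially correct: the recurrences check, the base case is right, the (R2) sign analysis places the roots of $h_{n+1,0}$ exactly where you say, and the (R1) chains you give do close the induction. One point to make explicit when you write it out: the cross-level inequality $\alpha_j^{(n+1,l)}<\alpha_j^{(n,l)}$ produced by each (R1) step holds only for $j\le n$, but that is all you need since $h_{n,l+1}$ has only $n$ roots; similarly the top root $\alpha_{n+1}^{(n+1,l+1)}$ lies in $(\alpha_{n+1}^{(n+1,l)},\infty)$, which suffices for the within-level monotonicity. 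The sign analyses are routine but should be spelled out rather than gestured at with ``or Hermite--Kakeya--Obreschkoff.''
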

\begin{proof}
Yu (2009) has proved that if the sequence of binomial coefficients located on a ray of Pascal's triangle is a finite sequence, then it is a Polya frequency sequence, as conjectured by Su and Wang (2008).  That $h(s)$ has only real roots is a special case of this.  Since the coefficients of $h(s)$ alternate in sign, only positive roots are possible.  Let $\alpha$ be one of the roots and suppose $\alpha \geq r^r/(r-1)^{r-1}$.  Then we may choose $a=r$ and $b= r^r/\alpha \leq (r-1)^{r-1}$ in (\ref{uni}) and conclude by Lemma~\ref{lem2} that all coefficients of the power series expansion of $(1-rt+bt^r)^{-1}$ are positive, i.e.,  $0<\sum_{k=0}^n (-1)^k {r(n-k)+k+l \choose k} a^{r(n-k)} b^{k} =b^n h(\alpha)$ which contradicts $h(\alpha) =0$.
 \end{proof}
 
 \begin{lemma}
 \label{lem4}
 Let $e_1=\sum_{i=1}^r t_i,\ e_r = \prod_{i=1}^r t_i$ and let $c=r^rr!/(r-1)^{r-1}$.  Let $k, l$ be nonnegative integers such that $k+l\geq 2$.  Then for $r\geq 8$, the $r$-variate polynomials in $t_1, \ldots, t_r$ defined by $(e_1^r - ce_r)^k e_1^{rl}$ have positive coefficients only.
 \end{lemma}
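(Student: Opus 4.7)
The plan is to fix an arbitrary monomial $t_1^{a_1}\cdots t_r^{a_r}$ of total degree $r(k+l)=:r\nu$ and show that its coefficient $C(a)$ in $(e_1^r-ce_r)^k e_1^{rl}$ is nonnegative.  Expanding by the binomial theorem and then multinomially,
\[
C(a)=\sum_{j=0}^{\min(k,\,m)}\binom{k}{j}(-c)^j\binom{r(\nu-j)}{a_1-j,\ldots,a_r-j},\qquad m:=\min_i a_i.
\]
If $m=0$ only the $j=0$ term survives, giving a positive multinomial coefficient; so I may assume $m\geq 1$.  Factoring out $\binom{r\nu}{a_1,\ldots,a_r}$ and writing falling factorials $(x)_j=x(x-1)\cdots(x-j+1)$, the task reduces to proving
\[
S(a)=\sum_{j=0}^{\min(k,m)}\binom{k}{j}(-c)^j\,\frac{\prod_{i=1}^{r}(a_i)_j}{(r\nu)_{rj}}\geq 0.
\]

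My strategy is to reduce $S(a)\geq 0$ to a univariate inequality accessible to Lemma~\ref{lem3}.  By AM--GM applied to $\prod_i(a_i-t)$ for each $0\leq t\leq j-1$, one has $\prod_i(a_i)_j\leq((\nu)_j)^r$, with equality iff $a_1=\cdots=a_r=\nu$; equivalently, $q_j(a):=\prod_i(a_i)_j/(r\nu)_{rj}$ is the hypergeometric probability of drawing $(j,\ldots,j)$ from an urn with $a_i$ balls of colour $i$, and is Schur-concave in $a$.  The uniform configuration is therefore expected to be the extremal (hardest) case for the alternating sum $S(a)$, so it suffices to handle $a=(\nu,\ldots,\nu)$, for which
\[
S^\star=\sum_{j=0}^{\min(k,\nu)}\binom{k}{j}(-c)^j\,\frac{(r(\nu-j))!\,(\nu!)^r}{((\nu-j)!)^r\,(r\nu)!}.
\]
Writing $c=r!\cdot s^\ast$ with $s^\ast:=r^r/(r-1)^{r-1}$ isolates precisely the threshold appearing in Lemma~\ref{lem3}.

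The final step is to recast $S^\star$, via a Chu--Vandermonde rearrangement and a change of summation index, as a positive scalar multiple of $h(s^\ast)$, where $h$ is the real-rooted polynomial of Lemma~\ref{lem3} with appropriately chosen parameters $n,l$.  The factors $(r!)^j$ coming from $(-c)^j=(-r!)^j(s^\ast)^j$ should combine with the multinomial ratios $(r(\nu-j))!(\nu!)^r/[((\nu-j)!)^r(r\nu)!]$ to produce exactly the binomial coefficients $\binom{r(n-j)+j+l}{j}$ defining $h$.  Since Lemma~\ref{lem3} places every root of $h$ strictly below $s^\ast$ and $h$ is monic, $h(s^\ast)>0$, which gives $S^\star\geq 0$.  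The main obstacle is precisely this combinatorial identification, which is not immediate and requires careful bookkeeping of the falling factorials.  A secondary subtlety is the Schur-style reduction from general $a$ to the uniform case: because of the alternating signs in $S(a)$, Schur-concavity of each $q_j$ does not automatically pin down the extremal configuration, so an inductive ``smoothing'' on the spread $\max_i a_i-\min_i a_i$, or a direct sign-cancellation argument leveraging the positive margin $h(s^\ast)>0$, will be needed; this is the step where the hypotheses $r\geq 8$ and $k+l\geq 2$ should enter through a quantitative slack estimate.
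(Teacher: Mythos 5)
Your proposal heads down a much harder road than the lemma requires, and it contains two genuine gaps, both of which you acknowledge and neither of which you close. First, the reduction from a general exponent vector $a$ to the uniform one $(\nu,\dots,\nu)$: Schur-concavity of each individual ratio $q_j(a)$ says nothing about the sign of the alternating sum $S(a)=\sum_j\binom{k}{j}(-c)^j q_j(a)$, since moving toward the uniform point increases the negative (odd-$j$) terms along with the positive ones; no smoothing or spread-induction argument is actually supplied, so the claim that the uniform case is extremal is unsupported. Second, the proposed identification of $S^\star$ with a positive multiple of $h(s^\ast)$ from Lemma~\ref{lem3} cannot hold in the term-by-term form you describe: matching the $j=0$ terms forces the proportionality constant to be $1$, and then the $j=1$ terms would require $k\,r!\,\nu^r\,(r\nu-r)!/(r\nu)!$ to equal the integer $\binom{r(n-1)+1+l}{1}=r(n-1)+l+1$, yet the left side is a small rational (about $0.04$ for $r=8$, $\nu=k=2$, $l=0$). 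The two sums also generally have different numbers of terms. No Chu--Vandermonde rearrangement is exhibited that would repair this, so the final step reduces to asserting $S^\star>0$, which is the thing to be proved.

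The paper's own proof is entirely elementary and does not invoke Lemma~\ref{lem3} at all (that lemma is used later, in the Theorem, to factor the alternating sum into linear factors handled by Lemmas~\ref{lem4} and \ref{lem5}). Since a product of polynomials with positive coefficients again has positive coefficients, it suffices to verify the cases $(k,l)\in\{(1,1),(2,0),(3,0)\}$, every admissible $(k,l)$ being a sum of these and copies of $(0,1)$. For $(k,l)=(1,1)$ the coefficient of $t_1^{\beta_1}\cdots t_r^{\beta_r}$ with all $\beta_i\ge 1$ is $\binom{2r}{\beta_1,\dots,\beta_r}-c\binom{r}{\beta_1-1,\dots,\beta_r-1}$, and positivity follows from the single inequality $(2r)!/(r!\,2^r)>c$ valid for $r\ge 7$; the cases $(2,0)$ and $(3,0)$ are handled similarly and account for the hypothesis $r\ge 8$. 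To salvage your route you would have to prove both the smoothing step and a genuine quantitative lower bound in place of the hoped-for identity, which is considerably more work than the direct multinomial comparison.
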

 \begin{proof}
We only need to show that $(e_1^r - ce_r)^ke_1^{rl}$ has positive coefficients with $(k, l)=(1,1), (2, 0)$ or $(3, 0)$.  The cases with larger $k$ and $l$ can then be obtained by multiplying suitable copies of these three.  Consider $(k, l)=(1,1)$ which corresponds to $e_1^{2r} - ce_r e_1^r$.  For $\beta_i\geq 1$ such that $\sum_{i=1}^r \beta_i = 2r$, the coefficient of $t_1^{\beta_1}\cdots t_r^{\beta_r}$ in $e_1^{2r} - ce_r e_1^r$ is given by ${2r \choose \beta_1,\ldots,\beta_r} - c {r \choose \beta_1 -1, \ldots, \beta_r-1}$, which is positive because $(2r)!/(r!\beta_1\ldots\beta_r)\geq (2r)!/(r!2^r) > c$ for $r\geq 7$.  We omit similar proofs in the other two cases, which entail $r\geq 8$.
   \end{proof}

\begin{lemma}
\label{lem5}
In the setting of Lemma~\ref{lem4} let $n\geq 2$ and $\alpha_1,\ldots, \alpha_n \in [0, c]$.  Then for $r\geq 8$ the $r$-variate polynomial in $t_1, \ldots, t_r$ defined by $\prod_{i=1}^n (e_1^r - \alpha_i e_r)$ has positive coefficients only.
\end{lemma}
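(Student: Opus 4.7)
The plan is to reduce Lemma~\ref{lem5} directly to Lemma~\ref{lem4} via a convex-combination decomposition. The interval $[0,c]$ is precisely the range spanned by the two extremal factors $e_1^r$ (the case $\alpha_i=0$) and $e_1^r-ce_r$ (the case $\alpha_i=c$), so each factor in the product can be expressed as a convex combination of these two.

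First I would set $a_i=\alpha_i/c\in[0,1]$ and record the identity
$$e_1^r-\alpha_i e_r \;=\; (1-a_i)\,e_1^r \;+\; a_i\,(e_1^r-c e_r).$$
Multiplying the $n$ identities together and distributing yields
$$\prod_{i=1}^n (e_1^r-\alpha_i e_r) \;=\; \sum_{S\subseteq\{1,\ldots,n\}} \biggl(\prod_{i\notin S}(1-a_i)\prod_{i\in S} a_i\biggr)\,(e_1^r-ce_r)^{|S|}\,e_1^{r(n-|S|)},$$
a nonnegative combination of the polynomials $(e_1^r-ce_r)^k e_1^{rl}$ with $k+l=n$, all scalar weights being products of numbers in $[0,1]$.

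Next I would invoke Lemma~\ref{lem4}: since $r\geq 8$ and $k+l=n\geq 2$ for every subset $S$, each polynomial $(e_1^r-ce_r)^{|S|}e_1^{r(n-|S|)}$ has a strictly positive coefficient on every monomial $t_1^{\beta_1}\cdots t_r^{\beta_r}$. To upgrade the conclusion from ``nonnegative'' to ``strictly positive'' for the full sum, I would note that the scalar weights sum to $\prod_{i=1}^n[(1-a_i)+a_i]=1$, so at least one subset $S$ carries a strictly positive weight; the Lemma~\ref{lem4} polynomial attached to that $S$ contributes positively to every monomial, while every remaining term contributes nonnegatively, and the total coefficient on any monomial is therefore positive.

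The main (and really only) obstacle is spotting the convex-combination decomposition at the outset; once it is in hand, the argument is immediate and requires no further estimates. The constant $c=r^rr!/(r-1)^{r-1}$ and the hypothesis $\alpha_i\in[0,c]$ are calibrated exactly so that the weights $a_i$ and $1-a_i$ are nonnegative, placing the proof squarely in the regime where Lemma~\ref{lem4} applies.
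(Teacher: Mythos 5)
Your proof is correct and uses exactly the same key idea as the paper, namely writing each factor as the convex combination $e_1^r-\alpha_i e_r=(1-a_i)e_1^r+a_i(e_1^r-ce_r)$ and reducing to Lemma~\ref{lem4}; the paper merely organizes the expansion as an induction on the number of $\alpha_i$ strictly between $0$ and $c$ rather than as your explicit sum over subsets. Your extra remark that the weights sum to $1$, guaranteeing at least one strictly positive term, is a slightly more careful justification of strict positivity than the paper bothers to give.
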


\begin{proof}
Let $m = \#\{i:\ 0<\alpha_i<c\}$. We use induction on $m$.  If $m=0$ then all $\alpha_i$ are either $0$ or $c$, and the claim follows from Lemma~\ref{lem4}.  If $m\geq 1$ then suppose $0<\alpha_1<c$.  Letting $\lambda= \alpha_1/c$ we have
$$\prod_{i=1}^n (e_1^r - \alpha_i e_r) = \left(\lambda(e_1^r - ce_r) + (1-\lambda)e_r^r\right)\prod_{i=2}^n (e_1^r - \alpha_i e_r).$$
The claim follows from the induction hypothesis. 
\end{proof}   

 \begin{theorem}
 \label{thm}
If $r\geq 8$ then the expansion of $(1-\sum_{i=1}^r t_i + r! \prod_{i=1}^r t_i)^{-1}$ as a power series in $t_1,\ldots, t_r$ around the origin has only nonnegative coefficients.
 \end{theorem}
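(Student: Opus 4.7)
The plan is to combine Lemma~\ref{lem1} (which writes the inverse of a univariate polynomial as an explicit power series), Lemma~\ref{lem3} (which controls the real roots of each coefficient polynomial), and Lemma~\ref{lem5} (which handles nonnegativity of products of the form $\prod (e_1^r - \alpha_i e_r)$), with a short direct argument to cover the one case that Lemma~\ref{lem5} does not reach.

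First I would substitute $a = e_1$ and $b = r!\, e_r$ into the identity (\ref{uni}). The coefficient $Q_{n, l}(a, b)$ of $t^{nr + l}$ in (\ref{uni}) is weight-homogeneous of weight $nr + l$ when $a$ is given weight $1$ and $b$ weight $r$, so $Q_{n, l}(e_1, r!\, e_r)$ is homogeneous of total degree $nr + l$ in $t_1, \ldots, t_r$. Hence $S := \sum_{n \geq 0,\; 0 \leq l \leq r-1} Q_{n, l}(e_1, r!\, e_r)$ is a well-defined formal power series in $\mathbb{Z}[[t_1, \ldots, t_r]]$, and a direct comparison of coefficients (using the recursion $Q_N(a, b) = a\, Q_{N-1}(a, b) - b\, Q_{N-r}(a, b)$ that falls out of (\ref{uni})) shows $(1 - e_1 + r!\, e_r)\, S = 1$. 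Since $h_{n, l}$ is monic of degree $n$ with real roots $\alpha_1^{(n, l)}, \ldots, \alpha_n^{(n, l)}$ by Lemma~\ref{lem3}, factoring $h_{n, l}(s) = \prod_i (s - \alpha_i^{(n, l)})$ gives
\begin{equation*}
Q_{n, l}(e_1, r!\, e_r) \;=\; e_1^l \prod_{i=1}^n \bigl(e_1^r - \alpha_i^{(n, l)}\, r!\, e_r\bigr),
\end{equation*}
and by Lemma~\ref{lem3} every scalar $\alpha_i^{(n, l)}\, r!$ lies in $(0, c)$.

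I would then verify that each summand is nonnegative. For $n = 0$ the summand is just $e_1^l$. For $n \geq 2$, Lemma~\ref{lem5} applies directly (with the roles of its $\alpha_i$ played by $\alpha_i^{(n, l)}\, r! \in [0, c]$) to give nonnegativity of $\prod_i (e_1^r - \alpha_i^{(n, l)}\, r!\, e_r)$, and multiplication by the nonnegative polynomial $e_1^l$ preserves this.

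The main obstacle is the case $n = 1$, since Lemma~\ref{lem5} excludes it and indeed a single factor $e_1^r - \alpha\, r!\, e_r$ can have a negative coefficient on $t_1 \cdots t_r$ when $\alpha > 1$. Fortunately $h_{1, l}(s) = s - (l + 1)$, so the summand reads $e_1^l(e_1^r - (l + 1)\, r!\, e_r) = e_1^{r + l} - (l + 1)\, r!\, e_1^l\, e_r$. The coefficient of $t_1^{\beta_1} \cdots t_r^{\beta_r}$ (with $\sum_i \beta_i = r + l$) is automatically nonnegative when some $\beta_i = 0$, since then the $e_r$-term contributes nothing; otherwise it equals $[(r + l)! - (l + 1)!\, r!\, \prod_i \beta_i] / \prod_i \beta_i!$. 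Maximizing $\prod_i \beta_i$ subject to $\beta_i \geq 1$ and $\sum_i \beta_i = r + l$ (the optimum is $\beta_i = 2$ at $l$ indices and $\beta_i = 1$ at the remaining $r - l$ indices, giving $\prod_i \beta_i \leq 2^l$) reduces nonnegativity to the inequality $\binom{r + l}{l} \geq (l + 1)\, 2^l$ for $0 \leq l \leq r - 1$. The ratio $\binom{r + l}{l}/((l + 1)\, 2^l)$ is unimodal in $l$, minimized at the endpoints $l = 0$ (where it equals $1$) and $l = r - 1$, and a short computation shows $\binom{2r - 1}{r - 1} \geq r\, 2^{r - 1}$ for $r \geq 8$, which finishes the proof.
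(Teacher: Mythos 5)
Your proposal is correct and follows essentially the same route as the paper: reduce via Lemma~\ref{lem1} to the homogeneous pieces $e_1^l\prod_{i=1}^n(e_1^r-\alpha_i r!\,e_r)$, use Lemma~\ref{lem3} to place the $\alpha_i r!$ in $(0,c)$, and invoke Lemma~\ref{lem5} for $n\geq 2$. Your explicit treatment of the $n=1$ case (reducing to $\binom{r+l}{l}\geq (l+1)2^l$ via the bound $\prod_i\beta_i\leq 2^l$) correctly fills in a step the paper only sketches as ``similar arguments as in the proof of Lemma~\ref{lem4}.''
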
  
\begin{proof}
Define $a=e_1=\sum_{i=1}^r t_i$ and $b= r!e_r = r!\prod_{i=1}^r t_i$.  By Lemma~\ref{lem1}, we only need to show that for $n\geq 0$ and $l=0,\ldots, r-1$ the polynomial in $t_1,\ldots, t_r$ defined by 
$$p(t_1,\ldots, t_r)\equiv a^l \sum_{k=0}^n (-1)^k {r(n-k)+k+l \choose k} a^{r(n-k)} b^{k}$$ 
has only nonnegative coefficients.  This is trivial for $n=0$.  For $n=1$, we need to show that, for $l=0, \ldots, r-1$, all coefficients of $e_1^l(e_1^r - (l+1) r! e_r)$ are nonnegative, which can be verified by similar arguments as in the proof of Lemma~\ref{lem4}.  Suppose $n\geq 2$.  By Lemma~\ref{lem3}, there exist $\alpha_1,\ldots, \alpha_r\in (0, r^r/(r-1)^{r-1})$ such that $p(t_1,\ldots, t_r) = a^l \prod_{i=1}^n (a^r - \alpha_i b)$ which, by Lemma~\ref{lem5}, has only positive coefficients as a polynomial in $(t_1, \ldots, t_r)$, assuming $r\geq 8$.
\end{proof}

{\bf Remark.} For smaller values of $r$ we may refine Lemma~\ref{lem4} by proving that $(e_1^r - ce_r)^k e_1^{rl}$ has only positive coefficients for all $k+l\geq n_0$ where $n_0$ is a positive integer depending on $r$.  We can then extend Theorem~\ref{thm} to $r=7, 6, 5, 4$.  We omit the details as Kauers (2007) and Pillwein (2019) have already proved the claimed positivity in such cases, although our approach here circumvents Proposition 3 of Gillis, Reznick and Zeilberger (1983) and may yield a shorter overall proof of the GRZ conjecture.

\end{section}

\end{document}